\newtheorem{thm}{Theorem}[section]
\newtheorem{cor}[thm]{Corollary}
\newtheorem{lem}[thm]{Lemma}
\theoremstyle{definition}
\newtheorem{defn}[thm]{Definition}
\theoremstyle{remark}
\newtheorem{rem}[thm]{Remark}
\theoremstyle{example}
\newtheorem{ex}[thm]{Example}
\numberwithin{equation}{section}
\DeclareMathOperator{\ess}{ess}
\newcommand{\abs}[1]{\left\vert#1\right\vert}
\newcommand{\norm}[1]{\left\Vert#1\right\Vert}
\begin{document}

	\title[Three non-zero solutions of a Neumann eigenvalue problems involving the fractional p-Laplacian  ]
	{Three non-zero solutions of a Neumann eigenvalue problems involving the fractional p-Laplacian  }
	\author[S.\ Gandal, J.\,Tyagi]
	{ Somnath Gandal }
	\address{Somnath Gandal \hfill\break
		Department of Mathematics, Central University of Rajasthan, \newline
		NH-8, Bandarsindari, District-Ajmer, Rajasthan, Pin-305817,
		India }
	\email{somnathg@curaj.ac.in, gandalsomnath@gmail.com}
	\date{\today}
	\thanks{Submitted \today.  Published-----.}
	\subjclass[2020]{35R11, 35D30, 35A15, 35J60, 35S15, 47G20}
	\keywords{Fractional p-Laplacian, Neumann boundary conditions, Variational methods, Critical points, Three solutions }

	\begin{abstract}
		In the present paper, we establish a multiplicity result for a following class of nonlocal Neumann eigenvalue problems involving the fractional p-Laplacian.
		\begin{align*}  \begin{cases}
				(-\Delta)^{s}_{p}u + a(x) \abs{u}^{p-2}u =\lambda h(x,u) & \text {in } \Omega, \\
				\mathcal{N}_{s,p}u=0  & \text {in } \mathbb{R}^N \setminus \overline{\Omega}, 
			\end{cases}
		\end{align*}
		Precisely, we demonstrate the existence of an open interval for positive eigenvalues $\lambda$, for which the problem has at least three non-zero solutions in $W^{s,p}_{\Omega}.$
	\end{abstract}
	
	\maketitle
	
	\tableofcontents
	
	\section{Introduction}
	We investigate the existance and multiplicity of solutions to the following problem:
	\begin{align} \label{P1} \tag{P1}	\begin{cases}
			(-\Delta)^{s}_{p}u + a(x) \abs{u}^{p-2}u =\lambda h(x,u) & \text {in } \Omega, \\
			\mathcal{N}_{s,p}u=0  & \text {in } \mathbb{R}^N \setminus \overline{\Omega}, 
		\end{cases}
	\end{align}
	where $\Omega \subset \mathbb{R}^N$ is a non-empty bounded open subset with boundary of class $C^1$, $0<s<1,$ $ 1 \leq  N,$ and $h: \Omega \times \mathbb{R} \rightarrow \mathbb{R}$ is a Carath\'{e}odory function with the following subcritical growth condition
	\begin{align}\label{caraclass}
		|h(x, t)| \leq a_1+a_2|t|^{q-1}, \quad \forall(x, t) \in \Omega \times \mathbb{R}
	\end{align}
	for some non-negative constants $a_1, a_2$ and $q \in \left(1, p_{s}^{*} \right),$ where 
	\begin{align*}
		p_{s}^{*}:= \begin{cases}
			\frac{Np}{N-ps} & \text{ if } N> ps,\\
			+\infty & \text{ if } N\leq ps.
		\end{cases}
	\end{align*}
	Moreover, $a \in L^{\infty}(\Omega)$, with $ \displaystyle{\operatorname{essinf}_{x \in \Omega}} a(x)>0, \lambda$ is a positive real parameter. We denote by $\mathfrak{F}$ the class of all Carath\'{e}odory functions that satisfy the condition \ref{caraclass}. 
	
	In recent years, the study of nonlocal problems involving the fractional p-Laplacian $	(-\Delta)_{p}^s $ has garnered significant attention from analysts due to its mathematical importance and wide range of applications. For $s\in \left(0,1 \right)$ and $p \in (1,\, + \infty),$ the fractional p-Laplace operator $	(-\Delta)_{p}^s $ is defined as follows:
\begin{align}
	(-\Delta)_{p}^s u(x):=C_{N,s,p} P.V. \int_{\mathbb{R}^N} \frac{|u(x)-u(y)|^{p-2}(u(x)-u(y))}{|x-y|^{N+sp} } dy, \end{align}
where P.V. stands for principal value and $C_{N,s,p}$ is the normalizing constant. In the context of nonlocality, at least three distinct types of fractional normal derivatives have been explored. The first is of a geometric nature (see \cite{Ros1,Ros2,Fall}), the second emerges from spectral notion of the fractional Laplace operator (see \cite{Mont,Sting}), and the third is $\mathcal{N}_{s, p},$ which aligns well with energy methods and variational techniques. It is defined as follows,
\begin{align}
	\mathcal{N}_{s, p} u(x):=C_{N,s,p} \int_{\Omega} \frac{|u(x)-u(y)|^{p-2}(u(x)-u(y))}{|x-y|^{N+sp}} d y, \quad x \in \mathbb{R}^N \backslash \overline{\Omega}.
\end{align}
The Neumann condition $	\mathcal{N}_{s, p}u$ is recently introduced for $p=2$ in \cite{Dip1} and generalized in \cite{Barr2, Mug1}. When \( p = 2 \), we denote \( (-\Delta)_p^s \) and \( \mathcal{N}_{s,p} \) simply as \( (-\Delta)^s \) and \( \mathcal{N}_s \), respectively. For the sake of simplicity, now on, we set $C_{N,s,p} =1.$  This Neumann condition leads to a variational framework for the corresponding problems involving fractional p-Laplace operators, see \cite{Barr2,Dip1,Gan1,Gan2,Mug1}. It is worth noting that \( \mathcal{N}_{s, p} \) is defined not at the boundary points, but for points residing outside the domain $\Omega .$ The operator $\mathcal{N}_{s, p}$ possesses a natural probabilistic interpretation, rendering it particularly appropriate for modeling and analysis in mathematical biology. 
The operator $ (-\Delta)^{s}_{p} $ is an infinitesimal generator of a L\'{e}vy process. A L\'{e}vy process is a stochastic process characterized by stationary and independent increments, but its paths may not necessarily be continuous. This process models the random movement of a particle over time, for $ t > 0.$ Let $ u(x, t) $ represent the probability density function of the particle's position. The function $ u $ satisfies the equation
$$u_t + (-\Delta)^{s}_{p} u = 0 \quad \text{in} \quad \mathbb{R}^N,$$
indicating that if the particle is located at $x \in \mathbb{R}^N ,$ it can``jump" to any other point $ y \in \mathbb{R}^N ,$ with the probability density of jumping to $ y $ being proportional to $ |x - y|^{-(N - sp)} .$ In simple terms, this operator describes diffusion or spread, but unlike the classical Laplacian, it accounts for the influence of distant points in space. This means that the ``diffusion" or ``spread" of $u$ is not limited to immediate neighbors, but extends over a broader region. The parameter $s$ controls the degree of nonlocality, with $s=1$ corresponding to the usual p-Laplacian and  $s<1$ describing a more spread-out or long-range interaction.

Now, let $ u(x, t) $ represent the probability density function of the particle's position as it moves randomly within a domain $ \Omega.$ When the particle exits the domain $ \Omega,$ it immediately returns to $ \Omega .$ The way in which the particle comes back into $ \Omega $ is as follows: If the particle exits $ \Omega $ and reaches a point  $ x \in \mathbb{R}^N \setminus \overline{\Omega},$ it may return to any point $y \in \Omega ,$ with the probability density of jumping from $x $ to $ y $ being proportional to $ |x - y|^{-(N - sp)} .$ The problem is then described by the following system of equations:
\begin{align}\label{p3}
\begin{cases}
	u_t + (-\Delta)^{s}_{p} u = 0 & \text{in } \Omega, \\
	u(x, 0) = u_0(x) & \text{in } \Omega, \\
	\mathcal{N}_{s,p} u = 0 & \text{in } \mathbb{R}^N \setminus \overline{\Omega},
\end{cases}
\end{align}
where $ \mathcal{N}_{s,p} u $ represents the appropriate boundary condition for the particle returning to $ \Omega .$ We refer to \cite{Dip1} for detailed description of these models in the case $p=2.$ When the fractional p-Laplacian $(- \Delta)^{s}_{p}$ in equation \ref{p3} is replaced by a superposition of the classical p-Laplacian and the fractional p-Laplacian, with either local or nonlocal Neumann boundary conditions, a new evolution equation emerges. In \cite{Dip3}, Dipierro and Valdinoci provide a compelling motivation for studying such models. This model captures the diffusion of a biological population within an ecological environment, influenced by both classical diffusion and long-range dispersal mechanisms. We also refer to \cite{Dip4} which deals with the sudy of problems involving mixed local and nonlocal operaotrs.\\
In most of the situations found in nature, the rate of change $u_t $ describes how a quantity evolves over time, influenced by both diffusion and interaction effects.  In these cases, $u $ satisfies the equation 
\begin{align}
	\begin{cases}
		u_t + (-\Delta)^{s}_{p} u = g(x,u) & \text{in } \Omega, \\
		u(x, 0) = u_0(x) & \text{in } \Omega, \\
		\mathcal{N}_{s,p} u = 0 & \text{in } \mathbb{R}^N \setminus \overline{\Omega},
	\end{cases}
\end{align}
 where $ g(x,u) $ represents the nonlinear interaction or reaction term, which depends on both the position $ x $ and the quantity $u $ itself. For different choices of $ g,$ researchers have studied the existence and qualitative properties of the solutions to such problems in the local case, i.e. when $s=1.$ The question of steady-state solutions to these problems is an interesting topic from both an application and mathematical perspective. In a biological or physical context, a steady-state solution represents a system that has reached equilibrium. For example:\\
 \textbf{Population Dynamics:} In a population model, a steady-state could represent the situation where the population density at each location remains constant over time because the birth and death rates (modeled by 
$g(x,u)$) and the movement of individuals (modeled by 
 $ (-\Delta)^{s}_{p} u  $ ) have balanced out.\\
 \textbf{Chemical Reactions:} In chemical reaction models, a steady-state solution could represent a situation where the concentration of a chemical substance stabilizes, with the rate of its diffusion matching the rate of its production or consumption.\\
Motivated by the significance and mathematical importance of the nonlocal operator $(-\Delta)_{p}^{s}$ and nonlocal normal condition $\mathcal{N}_{s,p}$, we investigate the existance and multiplicity of solutions to the problem \ref{P1}. In the local case where $s = 1,$ the existence of multiple solutions for the Neumann problem has been extensively studied in the literature. Notably, in lower dimensions $(p > N ),$ this has been explored in \cite{Averna, Bonano3, Bonano4, Bonano5}, while in higher dimensions $(N > p ),$ the topic has been investigated in \cite{Dagui, Faraci, Ricceri1, Ricceri2}.  Since the nonlocal Neumann condition \( \mathcal{N}_{s,p} \) was recently introduced there is limited knowledge about the problems involving it.  Also the nonlocal nature of both the operator and the boundary conditions likely play a significant role in this limited exploration. To highlight works on the fractional p-Laplacian and nonlocal Neumann conditions, I would like to reference \cite{Mug1}, where Mugnai and Lippi developed a variational framework for the nonlocal p-Neumann derivative. They proved a maximum principle and established an unbounded sequence of eigenvalues. While this does not constitute a complete spectral notion for the p-fractional Laplacian with the p-Neumann derivative, some classical properties of the set of eigenvalues for the p-Laplacian remain valid in this context. Furthermore, in \cite{Mug}, Mugnai, Perera, and Lippi studied the regularity of solutions to the following problem:
\begin{align}
\begin{cases}
	(-\Delta)_{p}^{s} u = f(x,u) & \text{in } \Omega, \\
	\mathcal{N}_{s,p} u= 0 & \text{in } \mathbb{R}^N \setminus \overline{\Omega},
\end{cases}
\end{align}
subject to specific conditions on the growth of  $f.$ Using the notion of linking over cones, Mugnai and Lippi \cite{Mug} proved the existence of solutions to the fractional problem:
\begin{align}
\begin{cases}
	(-\Delta)_p^s u = \lambda |u|^{p-2} u + g(x,u) & \text{in } \Omega, \\
	\mathcal{N}_{s,p} u= 0 & \text{in } \mathbb{R}^N \setminus \overline{\Omega},
\end{cases}
\end{align}
where \(\lambda \geq 0\) and \(g\) satisfies certain conditions. Now, I would like to draw the readers' attention to the case \( p = 2 \), which corresponds to the fractional Laplacian \( (-\Delta)^s \). Consider a following problem:
\begin{align}\label{p=2case}
	\begin{cases}
		(-\Delta)^{s} u + \lambda u= h(u) & \text{in } \Omega, \\
		\mathcal{N}_{s}u = 0 & \text{in } \mathbb{R}^N \setminus \overline{\Omega},
	\end{cases}
\end{align}
Barrios, Del Pezzo, and García-Melián \cite{Barr2} explored the existence of non-negative solutions to \eqref{p=2case} in the subcritical case, i.e., when $ |h(u)| = O(|u|^q),$ with $ 1 \leq q < \frac{N+2s}{N-2s}.$ Further, Gandal and Tyagi \cite{Gan2} obtained the non-negative solutions of \eqref{p=2case} in the critical case, i.e., when $ q = \frac{N+2s}{N-2s}$ and, also proved their uniqueness in small domains. The asymptotic behavior of solutions to \eqref{p=2case} is an interesting aspect to explore. By asymptotic behavior, we refer to how the solutions behave as $\lambda$ becomes very large. In the local case, i.e., when $s = 1,$ the geometry of the domain plays a crucial role in analyzing the shape of the least energy solutions, see \cite{Adi,Ni}. Specifically, the curvature of the boundary helps in determining the location of the maximum point of solutions. The nonlocal nature of the fractional Laplacian and the boundary conditions add complexity to the study of this question to problem \eqref{p=2case}. The question of asymptotic behavior of solutions to \eqref{p=2case} has been partially addressed for the subcritical case by Gandal and Tyagi in \cite{Gan1}, while it remains completely open in the critical case.  Using different approach, Cinti and Colasuonno \cite{Cinti1,Cinti} established the existence of non-negative radial solutions to \eqref{p=2case}, when $\Omega$ is either a ball or annulus and $h$ is a superlinear nonlinearity. Readers can also explore the recent citations of the articles mentioned above for more in-depth and valuable information on nonlocal Neumann problems. \\
To the best of my knowledge, this is the first paper to address the multiplicity of nonlocal Neumann problems involving the fractional p-Laplacian. The current work in this paper extends the local multiplicity results established in \cite{Averna,Dagui} for problems involving the p-Laplace operator to the fractional p-Laplace operator.  Before presenting our main results, we first make the necessary functional setting as follows.\\
\noindent \textbf{Functional framework:}  Let $T(\Omega):= \mathbb{R}^{2N} \setminus \left( \mathbb{R}^N \setminus \Omega \right)^2$ be a cross shaped set on $\Omega.$ For a measurable function $u : \mathbb{R}^N \rightarrow \mathbb{R} $ set  
 $$\norm{u}:= \left( \int_{\Omega} a(x) \abs{u}^p dx + \frac{1}{2}\int_{T(\Omega)} \frac{|u(x)-u(y)|^{p}}{|x-y|^{N+sp}} dx d y \right)^{\frac{1}{p}}.$$ Define 
 $W^{s,p}_{\Omega}:= \left\{ u : \mathbb{R}^N \rightarrow \mathbb{R} \text{ measurable } \mid \norm{u}<\infty \right\}.$ Note that $W^{s,p}_{\Omega}$ is a reflexive Banach space, and when $p=2,$ it is a Hilbert space with natural inner product induced by the above norm. We now recall the Sobolev embedding results. Let $\Omega \subset \mathbb{R}^N$ be an open bounded set of class $C^{1}.$ Then we have the following embeddings:
 \begin{enumerate}
 	\item[(i)] If $N>sp,$ $ W^{s,p}_{\Omega} \hookrightarrow L^q(\Omega)$ is continuous for $1\leq q \leq p^{*}_{s}$ and compact for $1\leq q < p^{*}_{s}.$ i.e., there exists a positive constant $c_q$ such that
 	\begin{align}\label{sobolevconst}
 		\|u\|_{L^q(\Omega)} \leq c_q\|u\|,
 	\end{align}
 	for every $u \in W^{s,p}_{\Omega} $. \\
 	\item[(ii)] If $N=sp,$ $ W^{s,p}_{\Omega} \hookrightarrow L^q(\Omega)$ is compact embedding for all $1 \leq q< + \infty $ and hence the above inequality holds in this case as well. \\
	\item[(iii)] If $N<sp,$ $ W^{s,p}_{\Omega} \hookrightarrow C(\overline{\Omega})$  is compact.
 \end{enumerate}
  We refer to \cite{Din1,Mug1} for the proofs of the embedding results and more about the fractional Sobolev inequalities. The solutions to problem \eqref{P1} are understood to be weak solutions, which are defined as follows.
\begin{defn}
$u \in W^{s,p}_{\Omega}$ is said to be a weak solution to the problem \eqref{P1} if it satisfies the following equation:
$$
\frac{1}{2} \int_{T(\Omega)} \frac{\abs{u(x)-u(y)}^{p-2}(u(x)-u(y))(v(x)-v(y))}{|x-y|^{N+p s}} d x d y + \int_{\Omega} a(x) \abs{u}^{p-2}u v \, dx =\lambda \int_{\Omega} h(x,u) v \,d x,
$$
for every $v \in W^{s,p}_{\Omega}.$
\end{defn}

\noindent Let us define the functional $J_{\lambda} : W^{s,p}_{\Omega} \rightarrow \mathbb{R}$ as follows.
\begin{align}
	J_{\lambda}(u)= \frac{1}{p} \left(  \frac{1}{2}\int_{T(\Omega)} \frac{|u(x)-u(y)|^{p}}{|x-y|^{N+sp}} dx d y  +  \int_{\Omega} a(x)\abs{u}^p dx \right)- \lambda \int_{\Omega} \left( \int_{0}^{u(x)} h(x, t) dt \right) dx.
\end{align}
Since the embedding $ W^{s,p}_{\Omega} \hookrightarrow L^q(\Omega)$ is continuous, the functional $J_{\lambda}$ is well defined and of class $C^{1}$ on $W^{s,p}_{\Omega}.$ Note that any critical point of $J_{\lambda}$ is a weak solution to \eqref{P1}. 
Further, let
$$
H(x, \xi):=\int_0^{\xi} h(x, t) d t,
$$
for every $(x, \xi) \in \Omega \times \mathbb{R}.$ We investigate the existence of multiple solutions in two distinct cases naturally arising from the Sobolev embedding theorems.

\noindent \textbf{Case I:} First we consider the case $1 \leq N<sp,$ $p \geq 2.$
In this case, we know that the space $W^{s,p}_{\Omega}$ is compactly embedding in  $C(\overline{\Omega}),$ so that \\
\begin{align}\label{bestconst}
c:=\sup_{u \in W^{s,p}_{\Omega}\setminus \left\{0\right\}} \frac{\norm{u}_{L^{\infty}(\Omega)}}{\norm{u}} < \infty.
\end{align}
Clearly, $c^p \norm{a}_{L^{1}(\Omega)} \geq 1.$

\begin{thm} \label{mainresult1}
	Let $1\leq N<sp,$  $p \geq 2$ and $h: \Omega \times \mathbb{R} \rightarrow \mathbb{R}$ be a Carath\'{e}odory function such that for all $R>0,$ $\sup_{\abs{u} \leq R} \abs{h(\cdot,u)} \in L^{1}(\Omega).$  Assume that 
	\begin{enumerate}
	\item[(Ah1)]there exist a positive constant $t<p,$ and  a function $\mu \in L^{1}(\Omega)$ such that $$H(x,\xi) \leq \mu(x)\left(1 + \abs{\xi}^{t}\right) $$ for almost every $x \in \Omega$ and $\xi \in \mathbb{R};$\\
	\item[(Ah2)] there exist two positive constants $\gamma$ and  $\eta,$ with $\eta > \gamma,$ such that  $$\frac{\int_{\Omega} H(x, \eta) d x}{\eta^p} > \frac{1+c^p \norm{a}_{L^{1}(\Omega)}}{\Gamma},$$ where $\Gamma:=\int_{\Omega} \sup_{\abs{\xi}\leq\gamma } H(x,\xi) dx.$  Then, for each parameter $\lambda$ belonging to $$\left(\frac{\eta^p \norm{a}_{L^{1}(\Omega)}}{p\int_{\Omega} H(x,\eta)dx-p\Gamma},\, \frac{\gamma^p}{p c^p \Gamma} \right), $$ the problem \eqref{P1} possesses at least three weak solutions in $W^{s,p}_{\Omega}.$
	\end{enumerate}
\end{thm}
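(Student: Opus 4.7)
The plan is to recast~\eqref{P1} as a critical point problem for the energy
\[
J_\lambda=\Phi-\lambda\Psi, \qquad \Phi(u)=\tfrac{1}{p}\|u\|^p, \qquad \Psi(u)=\int_\Omega H(x,u(x))\,dx,
\]
and apply a Bonanno--Averna type three critical points theorem on $W^{s,p}_\Omega$.

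First I would verify the abstract hypotheses on $\Phi$ and $\Psi$. The functional $\Phi$ is coercive, continuously G\^ateaux differentiable and sequentially weakly lower semicontinuous (convexity of $t\mapsto|t|^p$ plus Fatou), and for $p\geq 2$ its derivative $\Phi'$ admits a continuous inverse on $(W^{s,p}_\Omega)^*$ thanks to the uniform convexity/$(S_+)$-property of the Gagliardo-type norm, together with the coercive contribution of the weight $a$. The local growth assumption $\sup_{|u|\leq R}|h(\cdot,u)|\in L^1(\Omega)$ combined with the compact embedding $W^{s,p}_\Omega\hookrightarrow C(\overline{\Omega})$ (Case~I) makes $\Psi$ sequentially weakly continuous with compact G\^ateaux derivative. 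Coercivity of $J_\lambda$ for every $\lambda>0$ then follows from (Ah1): the bound $H(x,\xi)\leq\mu(x)(1+|\xi|^t)$ gives $\lambda\Psi(u)\leq \lambda\|\mu\|_{L^1(\Omega)}(1+c^t\|u\|^t)$, and since $t<p$ the principal part dominates as $\|u\|\to\infty$.

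Next I would choose the parameters of the abstract scheme by setting $r:=\gamma^p/(pc^p)$ and picking the constant test function $w(x)\equiv\eta$. Constants belong to $W^{s,p}_\Omega$ and the Gagliardo double integral vanishes on them, so $\|w\|^p=\eta^p\|a\|_{L^1(\Omega)}$, hence $\Phi(w)=\eta^p\|a\|_{L^1(\Omega)}/p$. The elementary bound $c^p\|a\|_{L^1(\Omega)}\geq 1$ together with $\eta>\gamma$ yields $\Phi(w)>r$. For any $u$ with $\Phi(u)\leq r$, the embedding constant \eqref{bestconst} gives $\|u\|_{L^\infty(\Omega)}\leq c(pr)^{1/p}=\gamma$, and therefore $\Psi(u)\leq\Gamma$. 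Rewriting (Ah2) exhibits the ``main inequality'' of the three critical points theorem in the form
\[
\frac{\sup_{\Phi(u)\leq r}\Psi(u)}{r}\leq\frac{pc^p\Gamma}{\gamma^p}<\frac{p\int_\Omega H(x,\eta)\,dx}{\eta^p\|a\|_{L^1(\Omega)}}=\frac{\Psi(w)}{\Phi(w)}.
\]
Bonanno's theorem then delivers, for every
\[
\lambda\in\left(\frac{\Phi(w)}{\Psi(w)-\Gamma},\,\frac{r}{\Gamma}\right)=\left(\frac{\eta^p\|a\|_{L^1(\Omega)}}{p\int_\Omega H(x,\eta)\,dx-p\Gamma},\,\frac{\gamma^p}{pc^p\Gamma}\right),
\]
three distinct critical points of $J_\lambda$, i.e.\ three distinct weak solutions of~\eqref{P1}.

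The step I anticipate as the main obstacle is ruling out $u\equiv 0$ among the three produced critical points, since the abstract theorem only guarantees three distinct critical points without prescribing their location. I plan to handle this by observing that (Ah2) forces $\int_\Omega H(x,\eta)\,dx>\Gamma$; for $\lambda$ in the stated interval this gives $J_\lambda(w)<0=J_\lambda(0)$, and combined with the ``two local minima plus a mountain-pass type critical point'' architecture of Bonanno's result, it separates the origin from all three critical points and yields three \emph{non-zero} weak solutions of~\eqref{P1}. A secondary, largely bookkeeping, task is to verify the $(S_+)$ property of the energy operator $\Phi'$ in the fractional $p$-Neumann setting with the zero-order weight $a(x)$, where the Gagliardo seminorm and the $L^p(a\,dx)$ contribution must be jointly handled.
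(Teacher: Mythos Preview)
Your proposal is correct and mirrors the paper's argument: same functionals $\Phi,\Psi$ (called $T,S$ there), same level $r=\gamma^p/(pc^p)$, same constant test function $w\equiv\eta$, and the same embedding bound $\|u\|_{L^\infty}\le\gamma$ on $\{\Phi\le r\}$ yielding $\sup_{\Phi\le r}\Psi\le\Gamma$. Two small remarks: the paper invokes the Averna--Bonanno version (Theorem~\ref{threecrit1}), whose quantity $A_2(r)\ge p(\Psi(w)-\Gamma)/(\|a\|_{L^1}\eta^p)$ is precisely what puts the $-\Gamma$ into the lower endpoint of the $\lambda$-interval, whereas the inequality you display is the hypothesis of the Bonanno--Marano version (Theorem~\ref{Threecrit2}), which yields the larger interval with lower endpoint $\Phi(w)/\Psi(w)$---either route proves the stated result, but you should be explicit about which abstract theorem you apply so that the interval you write actually matches its conclusion; and the statement only claims three weak solutions, not three \emph{non-zero} ones, so your final paragraph on excluding the origin is not needed here.
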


\noindent \textbf{Case II:} Next we consider the case $ N \geq sp \geq 1,$ $p>1.$
We set
$$
\kappa:=\left(\frac{p}{\|a\|_{L^1(\Omega)}}\right)^{\frac{1}{p}}
$$
and
$$
L_1:=\frac{c_1}{p^{\frac{p-1}{p}}}\|a\|_{L^1(\Omega)}, \quad L_2:=\frac{c_q^q}{q p^{\frac{p-q}{p}}}\|a\|_{L^1(\Omega)} .
$$
Now, we state the second main result of this paper.
\begin{thm}\label{mainresult2}
Let $h \in \mathfrak{F}$ and assume that
\begin{enumerate}
\item[(Bh1)] There exist the positive constants $b$ and $t<p$ such that
$$
H(x, \xi) \leq b\left(1+|\xi|^{t}\right)
$$
for almost every $ x \in \Omega$ and for every $\xi \in \mathbb{R}$; 
\item[(Bh2)] There exist two positive constants $\varepsilon$ and $\delta$, with $\delta>\varepsilon \kappa$ such that
$$
\frac{\int_{\Omega} H(x, \delta) d x}{\delta^p}>a_1 \frac{L_1}{\varepsilon^{p-1}}+a_2 L_2 \varepsilon^{q-p}
$$
\end{enumerate} 
Then, for each parameter $\lambda$ belonging to
$$
\left( \frac{\delta^p\|a\|_{L^1(\Omega)}}{p \int_{\Omega} H(x, \delta) d x}, \frac{\|a\|_{L^1(\Omega)}}{p\left(a_1 \frac{L_1}{\varepsilon^{p-1}}+a_2 L_2 \varepsilon^{q-p}\right)} \right)
$$
the problem \eqref{P1} possesses at least three weak solutions in $W^{s,p}_{\Omega}.$
\end{thm}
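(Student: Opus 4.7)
The plan is to apply a Ricceri-type three critical points theorem (in the form used by Averna and Bonanno in the local case referenced above) to the $C^{1}$ energy
\[
J_{\lambda}(u) \;=\; \Phi(u) - \lambda \Psi(u), \qquad \Phi(u):=\tfrac{1}{p}\|u\|^{p}, \quad \Psi(u):=\int_{\Omega} H(x,u)\,dx,
\]
whose critical points coincide with the weak solutions of \eqref{P1}. Because $\Phi$ is convex, continuous, and coercive on the reflexive space $W^{s,p}_{\Omega}$, it is sequentially weakly lower semicontinuous and of class $C^{1}$. The compact embedding $W^{s,p}_{\Omega} \hookrightarrow L^{q}(\Omega)$ for $q \in [1, p^{*}_{s})$ supplied in the functional framework, combined with the subcritical growth \eqref{caraclass}, makes $\Psi$ sequentially weakly continuous and of class $C^{1}$ with completely continuous derivative. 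Both functionals vanish at $0$, so the abstract structural hypotheses required to invoke the three critical points theorem are fulfilled.

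The decisive step is to exhibit a test function $\bar u$ and a level $r>0$ for which the Ricceri--Bonanno separation inequality $\sup_{\Phi(u)\le r}\Psi(u)/r < \Psi(\bar u)/\Phi(\bar u)$ holds. Since the seminorm defined over $T(\Omega)$ vanishes on constants, I would take $\bar u\equiv\delta$, so that $\bar u\in W^{s,p}_{\Omega}$ with $\Phi(\bar u)=\delta^{p}\|a\|_{L^{1}(\Omega)}/p$ and $\Psi(\bar u)=\int_{\Omega}H(x,\delta)\,dx$, and set $r:=\varepsilon^{p}$; the hypothesis $\delta > \varepsilon\kappa$ is then exactly $\Phi(\bar u)>r$. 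To estimate $\sup_{\Phi(u)\le r}\Psi(u)$, I would combine the pointwise bound $H(x,\xi)\le a_{1}|\xi|+(a_{2}/q)|\xi|^{q}$ coming from \eqref{caraclass} with the Sobolev estimates $\|u\|_{L^{1}}\le c_{1}\|u\|$, $\|u\|_{L^{q}}\le c_{q}\|u\|$, and with $\|u\|\le (pr)^{1/p}=p^{1/p}\varepsilon$ on $\{\Phi\le r\}$. Collecting the powers of $p$ and $\varepsilon$ identifies the resulting bound with $(p/\|a\|_{L^{1}(\Omega)})(a_{1}L_{1}/\varepsilon^{p-1}+a_{2}L_{2}\varepsilon^{q-p})$; multiplying (Bh2) through by the same factor $p/\|a\|_{L^{1}(\Omega)}$ then produces the required strict separation.

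Uniform coercivity of $J_{\lambda}$ on bounded intervals of $\lambda$ follows from (Bh1): since $t<p\le p^{*}_{s}$, the embedding into $L^{t}(\Omega)$ gives $\Psi(u)\le b|\Omega|+bc_{t}^{t}\|u\|^{t}$, so $J_{\lambda}(u)\ge \|u\|^{p}/p-\lambda b|\Omega|-\lambda b c_{t}^{t}\|u\|^{t}\to+\infty$ as $\|u\|\to\infty$. With every hypothesis of the three critical points theorem verified, its conclusion yields three critical points of $J_{\lambda}$ for each $\lambda$ in the open interval $(\Phi(\bar u)/\Psi(\bar u),\,r/\sup_{\Phi\le r}\Psi)$; inserting the explicit bounds above recovers exactly the interval displayed in the statement. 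The principal technical obstacle is the careful bookkeeping that matches the Sobolev-derived estimate with the specific combination $a_{1}L_{1}/\varepsilon^{p-1}+a_{2}L_{2}\varepsilon^{q-p}$ appearing in (Bh2); a subsidiary point is confirming that each of the three critical points is genuinely nontrivial, which is ensured by the fact that the separation level $r$ lies strictly between $\Phi(0)=0$ and $\Phi(\bar u)$, forcing the produced critical points to avoid the trivial one.
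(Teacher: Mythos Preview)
Your proposal is correct and follows essentially the same route as the paper: both apply the Bonanno--Marano three critical points theorem (the paper's Theorem~\ref{Threecrit2}) with $\Phi(u)=\|u\|^{p}/p$, $\Psi(u)=\int_{\Omega}H(x,u)\,dx$, the constant test function $\bar u\equiv\delta$, and the level $r=\varepsilon^{p}$, verifying the separation condition via the pointwise bound from \eqref{caraclass} together with the Sobolev embeddings, and coercivity via (Bh1). One minor point: your closing remark that the separation level forces the three critical points to be nontrivial is not justified (and not needed), since the statement of Theorem~\ref{mainresult2} only asserts three weak solutions, one of which may well be $0$; the non-zero conclusion appears only in Corollary~\ref{result2cor}, where the extra hypothesis $\psi(0)\neq 0$ rules out the trivial solution.
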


\begin{rem}
	 It is important to emphasize that obtaining explicit upper bounds for the constants $c$ or $c_q$ will provide a sufficient condition to ensure the hypotheses (Ah2) or (Bh2) in Theorem \ref{mainresult1} and Theorem \ref{mainresult2}, respectively. While the attainability of Sobolev constants $c$ or  $ c_q $ is well-established in the literature, obtaining their explicit value is generally a challenging problem. In fact, it is a general fact that the explicit solutions to the most of the nonlinear equations are not yet known.
\end{rem}

\begin{rem}
We observe that Theorem \ref{mainresult1} holds true when $N < ps,$ but does not apply when $N \geq ps.$ Specifically, Theorem \ref{mainresult1} cannot be applied to Example \ref{example1}. However, by using Theorem \ref{mainresult2}, one can verify the existence of three solutions to Example \ref{example1} (see Section 3 for the details).
\end{rem}

\noindent The main tool used to prove Theorem \ref{mainresult1} and Theorem \ref{mainresult2}  are the critical points theorems which we recall here.
\begin{thm}\label{threecrit1}(Theorem B \cite{Averna1})
Let $M$ be a reflexive real Banach space and  $M^*$ denotes the dual space of $M.$ Let  $T: M \rightarrow \mathbb{R}$ be a continuously G\^{a}teaux differentiable and sequentially weakly lower semicontinuous functional whose G\^{a}teaux derivative admits a continuous inverse on $M^*,$ also let $ S: M \rightarrow$ $\mathbb{R}$ be a continuously Gâteaux differentiable functional whose G\^{a}teaux derivative is compact. Assume that
\begin{enumerate}
\item[(a1)] $T-\lambda S$ is coercieve for all $\lambda \in (0, \infty).$ \\
\item[(a2)] there exist $r \in \mathbb{R},$ such that $\inf_{M} T <r,$ and $A_{1}(r)<A_{2}(r),$ where $$A_{1}(r):= \inf_{u \in T^{-1}\left((-\infty, r)\right)} \frac{\sup_{\overline{T^{-1}\left((-\infty, r)\right) }^{\omega}} S- S(u) }{r-T(u)}, $$   $$A_{2}(r):= \inf_{u \in T^{-1}\left((-\infty, r)\right)} \sup_{v \in T^{-1}([r, \infty)} \frac{S(v)-S(u)}{T(v)-T(u)},$$ and $\overline{T^{-1}\left((-\infty, r)\right) }^{\omega}$ is the closure of $T^{-1}\left((-\infty, r)\right)$ in the weak topology. Then, for each $\lambda \in \left(\frac{1}{A_1(r)},\, \frac{1}{A_{2}(r)} \right)$ the functional $T-\lambda S$ has at least three critical points in $M.$ 
\end{enumerate}
\end{thm}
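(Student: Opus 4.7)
The plan is to prove the theorem in the classical style of Averna--Bonanno/Ricceri, producing one critical point of $\Psi_\lambda := T-\lambda S$ inside the open sublevel set $T^{-1}((-\infty,r))$, a second critical point in its complement $T^{-1}([r,+\infty))$, and then a third critical point via a mountain pass argument applied between the first two. The underlying functional $\Psi_\lambda$ is of class $C^1$, it is sequentially weakly lower semicontinuous because $T$ is (the compactness of $S'$ makes $S$ sequentially weakly continuous), and it is coercive by (a1); these properties guarantee attainment of infima on every sequentially weakly closed set. Before anything else, I would verify that $\Psi_\lambda$ satisfies the Palais--Smale condition: for any (PS) sequence $\{u_n\}$, coercivity forces boundedness, compactness of $S'$ gives $S'(u_n)\to S'(u)$ strongly along a weakly convergent subsequence, hence $T'(u_n)=\Psi_\lambda'(u_n)+\lambda S'(u_n)\to \lambda S'(u)$ strongly, and then the continuous invertibility of $T'$ upgrades weak to strong convergence $u_n\to u$.

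Next I would exploit the inequality $A_1(r)<A_2(r)$ together with the range of $\lambda$. From the $A_1$ side of the bound I would extract a witness point $u^{*}\in T^{-1}((-\infty,r))$ for which
\begin{equation*}
\Psi_\lambda(u^{*}) \;<\; r-\lambda\sup_{\overline{T^{-1}((-\infty,r))}^{\omega}} S.
\end{equation*}
Since the set $T^{-1}((-\infty,r])$ is sequentially weakly closed by s.w.l.s.c.\ of $T$, the coercive, s.w.l.s.c.\ functional $\Psi_\lambda$ attains its minimum on it at some $u_1$; the witness inequality above forces $T(u_1)<r$, because any minimizer on the level set $\{T=r\}$ lying in $\overline{T^{-1}((-\infty,r))}^{\omega}$ would have $\Psi_\lambda$-value at least $r-\lambda\sup S$, contradicting the bound. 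Thus $u_1$ sits in the open set $T^{-1}((-\infty,r))$ and is a local minimizer of $\Psi_\lambda$ on the whole of $M$.

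From the $A_2$ side I would use the definition of $A_2(r)$ as an inf-sup of difference quotients to produce, for every $u\in T^{-1}((-\infty,r))$, a companion $v\in T^{-1}([r,+\infty))$ with $(S(v)-S(u))/(T(v)-T(u))>1/\lambda$, which rearranges to $\Psi_\lambda(v)<\Psi_\lambda(u)$. This shows $\inf_{T\geq r}\Psi_\lambda\le\inf_{T<r}\Psi_\lambda$, so the global minimizer $u_2$ of $\Psi_\lambda$ (which exists by coercivity plus s.w.l.s.c.) sits in $T^{-1}([r,+\infty))$. In particular $u_1\neq u_2$ and both are critical points. With two distinct local minima of the $C^{1}$ functional $\Psi_\lambda$ that satisfies (PS), I would close with the Pucci--Serrin refinement of the mountain pass theorem to obtain a third critical point $u_3$ at the min-max level over all continuous paths joining $u_1$ and $u_2$, concluding that $\Psi_\lambda$ has at least three critical points in $M$.

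The hard part will be Step 2, specifically the careful use of the weak closure $\overline{T^{-1}((-\infty,r))}^{\omega}$ in the definition of $A_1(r)$: one must rule out the possibility that the constrained minimizer lies on the interface $\{T=r\}$ while still being reachable in the weak topology from below. Balancing the reverse inequality coming from $A_2$ against the forward inequality coming from $A_1$ inside a single interval of $\lambda$ is where the hypothesis $A_1(r)<A_2(r)$ is consumed, and getting the strict inequalities in the right direction with all the sups over weak closures is the main technical obstacle.
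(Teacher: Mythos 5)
The paper does not actually prove this statement: it is quoted as Theorem B of Averna and Bonanno \cite{Averna1} and used as a black box, so there is no internal proof to compare yours against. That said, your outline reproduces the standard proof from that reference (which follows Ricceri's three-critical-points scheme): a constrained minimizer trapped in the sublevel set via the $A_1$ bound, a global minimizer forced into $T^{-1}([r,+\infty))$ via the $A_2$ bound, and a third critical point from the Pucci--Serrin mountain-pass theorem, with the (PS) condition obtained exactly as you describe from coercivity, compactness of $S'$, and the continuous invertibility of $T'$.

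Two points need attention before the argument closes. First, the interface issue you flag in Step 2 is resolved by minimizing $\Psi_\lambda$ over the weakly closed set $\overline{T^{-1}((-\infty,r))}^{\omega}$ itself rather than over $T^{-1}((-\infty,r])$; the supremum appearing in $A_1(r)$ is taken precisely over that weak closure, so any minimizer $u_1$ there with $T(u_1)\geq r$ would satisfy $\Psi_\lambda(u_1)\geq r-\lambda\sup_{\overline{T^{-1}((-\infty,r))}^{\omega}}S>\Psi_\lambda(u^{*})$, a contradiction; hence $T(u_1)<r$ and $u_1$ is a local minimizer on the open set $T^{-1}((-\infty,r))$. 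Second, the interval as printed in the statement, $\left(\frac{1}{A_1(r)},\frac{1}{A_2(r)}\right)$, is empty whenever $0<A_1(r)<A_2(r)$; your argument correctly uses $\lambda<1/A_1(r)$ to produce the first minimizer and $\lambda>1/A_2(r)$ to displace the global one, i.e., it proves the result for $\lambda\in\left(\frac{1}{A_2(r)},\frac{1}{A_1(r)}\right)$, which is how Theorem B actually reads in \cite{Averna1} (the endpoints in the paper's transcription are transposed, and the same transposition propagates into the final lines of the paper's proof of Theorem \ref{mainresult1}). With these two repairs your proposal is a correct proof.
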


\begin{thm}\label{Threecrit2} (\cite{Bonano1}, Theorem 3.2 \cite{Bonano})
	Let $M$ be a reflexive real Banach space and  $M^*$ denotes the dual space of $M.$ Let  $T: M \rightarrow \mathbb{R}$ be a coercive, continuously G\^{a}teaux differentiable and sequentially weakly lower semicontinuous functional whose G\^{a}teaux derivative admits a continuous inverse on $M^*,$ also let $ S: M \rightarrow$ $\mathbb{R}$ be a continuously Gâteaux differentiable functional whose G\^{a}teaux derivative is compact such that
	$$
	T(0)=S(0)=0 .
	$$
	Assume that there exist $\rho >0$ and $x_0 \in M$, with $\rho <T(x_0)$, such that:\\
	\begin{enumerate}
		\item[(b1)] $ \frac{\sup _{T(x) \leq \rho} S(x)}{\rho}<\frac{S(x_0)}{T(x_0)}$ \\
		\item[(b2)] For each $\lambda \in \Lambda_{\rho}:=\left(\frac{T(x_0)}{S(x_0)}, \frac{\rho}{\sup _{T(x) \leq \rho} S(x)} \right) $ the functional $T-\lambda S$ is coercive.
	\end{enumerate}
	Then, for each $\lambda \in \Lambda_{\rho}$, the functional $T-\lambda S$ has at least three distinct critical points in $M.$
\end{thm}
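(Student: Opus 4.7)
The plan is to build up three critical points of $I_\lambda := T - \lambda S$ as (i) a local minimum strictly inside the sublevel set $\{T \leq \rho\}$, (ii) a mountain-pass saddle separating this minimum from $x_0$, and (iii) a global minimum of $I_\lambda$. First I would verify that $I_\lambda$ is $C^1$, weakly sequentially lower semicontinuous (since $T$ is wlsc by hypothesis and $S$ is weakly sequentially continuous as a consequence of $S'$ being compact), coercive on $M$ for each $\lambda \in \Lambda_\rho$ by (b2), and satisfies the Palais--Smale condition. The PS verification is standard: a PS sequence is bounded by coercivity, so $u_n \rightharpoonup u$ up to a subsequence; compactness of $S'$ gives $S'(u_n) \to S'(u)$ in $M^*$; therefore $T'(u_n) = I_\lambda'(u_n) + \lambda S'(u_n) \to \lambda S'(u)$ in $M^*$, and continuous invertibility of $T'$ yields $u_n \to u$ strongly.

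The first critical point $\hat{u}$ is obtained by minimizing $I_\lambda$ on the weakly closed, nonempty set $K_\rho := T^{-1}((-\infty,\rho])$, which contains $0$. Weak lower semicontinuity of $I_\lambda$ and boundedness from below on $K_\rho$ give a minimizer $\hat{u}$. Using $I_\lambda(0)=0$ together with $\lambda \sup_{K_\rho} S < \rho$ (which is the upper endpoint of $\Lambda_\rho$), one concludes $T(\hat{u})<\rho$: otherwise $I_\lambda(\hat{u}) \geq \rho - \lambda\sup_{K_\rho} S > 0$ would contradict $I_\lambda(\hat{u}) \leq I_\lambda(0)=0$. Hence $\hat{u}$ lies in the strongly open sublevel set $\{T<\rho\}$ and is a genuine local minimum of $I_\lambda$ on $M$. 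Next, the lower endpoint $\lambda > T(x_0)/S(x_0)$ yields $I_\lambda(x_0)<0$. Any continuous path $\gamma:[0,1]\to M$ from $\hat{u}$ to $x_0$ must cross $\{T=\rho\}$ by the intermediate value theorem applied to $T\circ\gamma$, and on that set $I_\lambda \geq \rho - \lambda\sup_{K_\rho} S > 0$. The mountain pass theorem (applicable because PS holds) therefore produces a second critical point $u^*$ with
\[
I_\lambda(u^*) \,\geq\, \rho - \lambda\sup_{K_\rho} S \,>\, 0 \,\geq\, \max\bigl(I_\lambda(\hat{u}),\,I_\lambda(x_0)\bigr),
\]
so in particular $u^* \neq \hat{u}$.

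For the third critical point $\bar{u}$, I would take the global minimizer of $I_\lambda$, whose existence follows from coercivity and weak lower semicontinuity and which satisfies $I_\lambda(\bar{u}) \leq I_\lambda(x_0) < 0 < I_\lambda(u^*)$, so $\bar{u} \neq u^*$. If $T(\bar{u})>\rho$, then $\bar{u}$ is automatically distinct from $\hat{u}$ by its $T$-value, and we are done. The main obstacle is the degenerate configuration $T(\bar{u})\leq\rho$, in which $\bar{u}$ already minimizes $I_\lambda$ on $K_\rho$ and can coincide with $\hat{u}$ in value; to secure a third genuinely distinct critical point here, the argument must be refined, for instance by performing a secondary mountain-pass construction between $\bar{u}=\hat{u}$ and another negative-energy point in $\{T>\rho\}$ (extracted from coercivity and $I_\lambda(x_0)<0$), or by appealing to the abstract three-critical-points theorem of Averna--Bonanno (Theorem \ref{threecrit1}) after verifying the relation between $A_1(\rho)$ and $A_2(\rho)$ via the test choices $u=0$ and $v=x_0$, which is precisely the content of hypothesis (b1). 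This reconciliation between the direct variational construction and the abstract minimax inequality is where the subtlety of the proof lies.
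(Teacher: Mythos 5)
This theorem is not proved in the paper at all: it is imported verbatim from \cite{Bonano1} and Theorem 3.2 of \cite{Bonano}, so there is no in-paper argument to compare yours against, and your proposal has to stand on its own. Up to the second critical point it does: the constrained minimizer $\hat{u}$ with $T(\hat{u})<\rho$, the verification of (PS) via compactness of $S'$ and continuous invertibility of $T'$, and the mountain-pass point $u^*$ at level at least $\rho-\lambda\sup_{K_\rho}S>0$ are all correct and are essentially the standard route. The problem is the third point, and you have correctly located it yourself: when the global minimizer $\bar{u}$ lands in $\{T\leq\rho\}$ it may coincide with $\hat{u}$, and neither of your two proposed repairs closes this. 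A second mountain pass between $\hat{u}=\bar{u}$ and another negative-energy point of $\{T>\rho\}$ again produces a critical point at some level $c'\geq\rho-\lambda\sup_{K_\rho}S>0$, and nothing prevents that point from being $u^*$ itself, so no new critical point is guaranteed. The appeal to Theorem \ref{threecrit1} also fails: $A_2(\rho)$ is an \emph{infimum} over $u\in T^{-1}((-\infty,\rho))$, so testing $u=0$ gives an upper bound on $A_2(\rho)$, not the lower bound you need; controlling all admissible $u$ (with $v=x_0$ as test function in the inner supremum) only yields $A_2(\rho)\geq\bigl(S(x_0)-\sup_{T\leq\rho}S\bigr)/T(x_0)$, and comparing this with $A_1(\rho)\leq\sup_{T\leq\rho}S/\rho$ requires $\sup_{T\leq\rho}S/\rho<S(x_0)/(T(x_0)+\rho)$, which is strictly stronger than (b1). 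So (b1) is \emph{not} ``precisely the content'' of $A_1(\rho)<A_2(\rho)$, and at best you would recover the conclusion on a proper subinterval of $\Lambda_\rho$.

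The missing idea is to manufacture a \emph{second local minimum at negative level in the exterior region} rather than to rely on the unconstrained global minimizer. Concretely, set $\mathcal{O}:=\{u\in M: T(u)>\rho,\ I_\lambda(u)<0\}$; it is open, nonempty (it contains $x_0$ since $\lambda>T(x_0)/S(x_0)$), and $I_\lambda>0$ on the part of $\partial\mathcal{O}$ lying in $\{T=\rho\}$ while $I_\lambda=0$ on the rest, so $\inf_{\overline{\mathcal{O}}}I_\lambda\leq I_\lambda(x_0)<0$ cannot be approached near $\partial\mathcal{O}$. One cannot simply minimize weakly here, because superlevel sets of a sequentially w.l.s.c.\ functional need not be weakly closed; instead one applies Ekeland's variational principle on $\overline{\mathcal{O}}$ to get a (PS) sequence that stays in the open set $\mathcal{O}$, and (PS) then yields a critical point $w$ with $T(w)>\rho$ and $I_\lambda(w)<0$, hence distinct from both $\hat{u}$ (by its $T$-value, or because the two are local minima in disjoint open sets) and $u^*$ (by the sign of its energy). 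With two distinct local minima in hand, the Pucci--Serrin form of the mountain pass theorem delivers the third critical point in all cases. This extra minimization-over-the-exterior step is exactly what the cited proofs of Bonanno--Marano and Bonanno--Candito supply and what your outline is missing.
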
 
\noindent We will now prove our main results in the next two sections.
\section{\textbf{The Case $N<sp$}}
  \noindent Let $M:=W^{s,p}_{\Omega}$ and consider the functionals $T, S: M \rightarrow \mathbb{R}$ defined as follows

$$
T(u):=\frac{\|u\|^p}{p}= \frac{1}{p}\left( \int_{\Omega} a(x) \abs{u}^p dx + \frac{1}{2}\int_{T(\Omega)} \frac{|u(x)-u(y)|^{p}}{|x-y|^{N+sp}} dx d y \right),
$$ and 
$$\quad S(u):=\int_{\Omega} H(x, u(x)) d x, \quad \forall u \in M.$$ 
Since $N<sp,$ $M$ is compactly embedded in $C({\overline{\Omega}})$  implies that $T$ and $S$ are well defined and continuously G\^{a}teaux differentiable functionals with
\begin{align*}
	& T^{\prime}(u)(v)= \frac{1}{2} \int_{T(\Omega)} \frac{\abs{u(x)-u(y)}^{p-2}(u(x)-u(y))(v(x)-v(y))}{|x-y|^{N+p s}} d x d y +  \int_{\Omega} a(x)\abs{u}^{p-2}u v dx,\\
	& S^{\prime}(u)(v)=\int_{\Omega} h(x, u(x)) v(x) d x, 
\end{align*}
for every $u, v \in W^{s,p}_{\Omega}, $ also $S^{\prime}$ is compact. Fix $\lambda>0$. A critical point of the functional $J_{\lambda}=T- \lambda S$ is a function $u \in W^{s,p}_{\Omega}$ such that

$$
T^{\prime}(u)(v)-\lambda S^{\prime}(u)(v)=0
$$
for every $v \in W^{s,p}_{\Omega}.$ Hence, the critical points of the functional $J_{\lambda}$ are weak solutions of  \eqref{P1}. 

\begin{lem}
$T^{\prime}$ admits a continuous inverse on $M^{*}.$
\end{lem}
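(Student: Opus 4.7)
The strategy is to prove that $T'$ is continuous, strongly monotone, and coercive on $M$. Once these properties are established, the Minty--Browder theorem yields that $T'\colon M \to M^*$ is a bijection, and strong monotonicity then immediately delivers continuity of the inverse.

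First I would record the standard elementary inequality valid for $p\geq 2$: there exists a constant $C_p>0$ with
\begin{equation*}
(|a|^{p-2}a - |b|^{p-2}b)(a-b) \;\geq\; C_p |a-b|^p, \qquad \forall\, a,b \in \mathbb{R}.
\end{equation*}
Applying this pointwise to the pair $a=u(x)-u(y)$, $b=v(x)-v(y)$ inside the double integral over $T(\Omega)$, and to $a=u(x)$, $b=v(x)$ (weighted by $a(x)$) in the integral over $\Omega$, and then summing, I would obtain the strong monotonicity estimate
\begin{equation*}
\langle T'(u)-T'(v),\, u-v\rangle \;\geq\; C_p \norm{u-v}^p, \qquad \forall\, u,v \in M.
\end{equation*}
In particular, $T'$ is strictly monotone.

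Coercivity of $T'$ is immediate from $\langle T'(u), u\rangle = \norm{u}^p$, which gives $\langle T'(u), u\rangle/\norm{u} = \norm{u}^{p-1}\to\infty$ as $\norm{u}\to\infty$. Continuity of $T'\colon M\to M^*$ follows from routine H\"older estimates applied to each of the two integrals defining $T'$, using the fact that $\xi\mapsto|\xi|^{p-2}\xi$ is continuous and has controlled growth of order $p-1$. Having verified the hypotheses of the Minty--Browder theorem, $T'$ is a bijection from $M$ onto $M^*$.

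To upgrade bijectivity to continuity of $(T')^{-1}$, suppose $T'(u_n) \to T'(u)$ in $M^*$. Strong monotonicity combined with the definition of the dual norm gives
\begin{equation*}
C_p \norm{u_n - u}^p \;\leq\; \langle T'(u_n) - T'(u),\, u_n - u\rangle \;\leq\; \norm{T'(u_n)-T'(u)}_{M^*}\,\norm{u_n - u},
\end{equation*}
whence $\norm{u_n - u}^{p-1} \leq C_p^{-1}\norm{T'(u_n)-T'(u)}_{M^*}\to 0$, proving continuity. The main technical point is verifying that the scalar pointwise inequality transfers cleanly through both the weighted $\Omega$-integral and, in particular, the double integral over the cross-shaped set $T(\Omega)$; this is routine once one writes out the integrands, but deserves care since the critical condition $p\geq 2$ of Case I is precisely what makes the clean power-$p$ lower bound available.
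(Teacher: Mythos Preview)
Your proof is correct and follows essentially the same path as the paper: both establish uniform (strong) monotonicity of $T'$ via the elementary inequality $(|a|^{p-2}a-|b|^{p-2}b)(a-b)\geq C_p|a-b|^p$ valid for $p\geq 2$, verify coercivity and (hemi)continuity, and then invoke the Minty--Browder theorem (the paper cites it as Theorem~26.A in Zeidler~\cite{zie}). You are slightly more explicit in deducing continuity of $(T')^{-1}$ directly from the strong monotonicity bound, whereas the paper absorbs that step into the cited theorem.
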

\begin{proof}
Let $x,y \in \mathbb{R}^N,$ and $\langle  \,, \rangle $ denotes the usual inner product on $\mathbb{R}^N.$ Then, by using Equation 2.2 from \cite{Simon}, there exists a positive constant \( D(p) \) such that
\begin{align}
\langle \abs{x}^{p-2}x-\abs{y}^{p-2}y, x-y \rangle \geq D(p) \abs{x-y}^p.
\end{align}
Thus, one can easily verify that $$\left(T^{\prime}(u)-T^{\prime}(v) \right)(u-v) \geq D(p) \norm{u-v}^{p} \quad \forall u,v \in M.$$  Therefore, $T^{\prime}$ is a uniformly monotone operator on $M.$ Also, since $\ess \inf_{\Omega} a(x)>0,$ standard arguments enure that $T^{\prime}$ is coercieve and semicontinuos on $M.$ Therefore, by applying Theorem 26.A. from \cite{zie}, it follows that $ T^{\prime} $ has a continuous inverse on $ M^* .$
\end{proof}
\noindent \textbf{Proof of Theorem \ref{mainresult1}:} We use Theorem \ref{threecrit1} to prove the Theorem \ref{mainresult1}. From the above Lemma, we know that $T^{\prime}$ has a continuous inverse on $M^{*}.$
It is well-known that the Galiardo-seminorm $\int_{T(\Omega)} \frac{|u(x)-u(y)|^{p}}{|x-y|^{N+sp}} dx d y $ and the $L^{p}$-norms are sequentially weakly lower semicontinuous.  Therefore, $T$ is sequentially weakly lower semicontinuous. Note that, due to the assumption (Ah1) of Theorem \ref{mainresult1}, hypothesis (a1) of Theorem \ref{threecrit1} is satisfied, meaning that $ T - \lambda S $ is coercive for all $ \lambda \in (0, \infty).$ Now, in order to prove the hypothesis (a2) of Theorem \ref{threecrit1}, we let 
$r:=\frac{1}{p}\left(\frac{\gamma}{c}\right)^{p}.$ From the assumption (Ah2), we get
\begin{align*}
\frac{\Gamma}{c^p \norm{a}_{L^{1}(\Omega)}\gamma^p}= \frac{\int_{\Omega} \sup_{\abs{\xi}\leq\gamma } H(x,\xi) dx}{c^p \norm{a}_{L^{1}(\Omega)}\gamma^p} & < \frac{1}{c^p \norm{a}_{L^{1}(\Omega)}\left(1+c^p \norm{a}_{L^{1}(\Omega)} \right) } \frac{\int_{\Omega} H(x,\eta) dx}{\eta^p} \\
&= \left(\frac{1}{c^p \norm{a}_{L^{1}(\Omega)}}-\frac{1}{1+c^p \norm{a}_{L^{1}(\Omega)} }  \right) \frac{\int_{\Omega} H(x,\eta) dx}{\eta^p},
\end{align*}
then
\begin{align*}
\frac{1}{1+c^p \norm{a}_{L^{1}(\Omega)} } \frac{\int_{\Omega} H(x,\eta) dx}{\eta^p} + \frac{\Gamma}{c^p \norm{a}_{L^{1}(\Omega)}} < \frac{1}{c^p \norm{a}_{L^{1}(\Omega)}} \frac{\int_{\Omega} H(x,\eta) dx}{\eta^p} 
\end{align*}
thus, being $\gamma <\eta,$ we have
\begin{align*}
	\frac{1}{1+c^p \norm{a}_{L^{1}(\Omega)} } \frac{\int_{\Omega} H(x,\eta) dx}{\eta^p}  < \frac{\int_{\Omega} H(x,\eta) dx-\Gamma}{c^p \norm{a}_{L^{1}(\Omega) }\eta^p}
\end{align*}
therefore, by using (Ah2) again, we obtain
\begin{align*}
\frac{\Gamma}{\gamma^p} < \frac{\int_{\Omega} H(x,\eta) dx-\Gamma}{c^p \norm{a}_{L^{1}(\Omega) }\eta^p} 
\end{align*}
from which, multiplying by $p c^p,$ we obtain 
\begin{align}\label{pcp}
	\frac{p c^p  \Gamma}{\gamma^p} < p \frac{\int_{\Omega} H(x,\eta) dx-\Gamma}{ \norm{a}_{L^{1}(\Omega) }\eta^p} .
\end{align}
We claim that:
\begin{align}\label{claim1}\tag{Claim I}
A_{1}(r) \leq \frac{p c^p  \Gamma}{\gamma^p}
\end{align}
and 
\begin{align}\label{claim2} \tag{Claim II}
A_2(r) \geq p \frac{\int_{\Omega} H(x,\eta) dx-\Gamma}{ \norm{a}_{L^{1}(\Omega) }\eta^p},
\end{align}
from which hypothesis (a2) of Theorem \ref{threecrit1} follows. \\
To prove the \eqref{claim1}, first note $u=0$ belongs to $T^{-1}(-\infty, r),$ and that $S(0)=0,$ we get 

\begin{align*}
	A_1(r) \leq \frac{\sup_{\overline{T^{-1}(-\infty, r)}^{\omega}} \int_{\Omega} H(x,u(x))dx }{r},
\end{align*}
and, since $\overline{T^{-1}(-\infty, r)}^{\omega}= {T^{-1}(-\infty, r]},$ we have 
\begin{align*}
\frac{\sup_{\overline{T^{-1}(-\infty, r)}^{\omega}} \int_{\Omega} H(x,u(x))dx }{r} = \frac{\sup_{{T^{-1}(-\infty, r]}} \int_{\Omega} H(x,u(x))dx }{r}.
\end{align*}
Therefore, noting that  $\abs{u(x)} \leq c (pr)^{\frac{1}{p}}=\gamma,$ for every $u \in M$ such that $T(u) \leq r$ and for each $x \in \Omega,$ we obtain 
\begin{align*}
\frac{\sup_{{T^{-1}(-\infty, r]}} \int_{\Omega} H(x,u(x))dx }{r} \leq \frac{\int_{\Omega} \sup_{\abs{\xi} \leq \gamma }H(x,\xi) dx}{r}= \frac{\Gamma}{r}.
\end{align*}
So, from the definition of $r,$ the (\ref{claim1}) follows. Next, we prove the (\ref{claim2}). For each $v \in M$ with $T(v) \geq r,$ we have 
\begin{align*}
	A_2(r) &\geq \inf_{v \in T^{-1}(-\infty, r)} \frac{\int_{\Omega}H(x,v(x)) dx-\int_{\Omega}H(x,u(x)) dx }{T(v)-T(u)} \\
	& \geq \inf_{v \in T^{-1}(-\infty, r)} \frac{\int_{\Omega}  H(x,v(x)) dx-\int_{\Omega}\sup_{\abs{\xi}\leq \gamma} H(x,u(x)) dx }{T(v)-T(u)} ,
\end{align*}
from which, being $0<T(v)-T(u) \leq T(v)$ for every $u \in T^{-1}((-\infty, r)),$ and under further condition 
\begin{align}\label{HGamma}
\int_{\Omega}H(x,v(x)) dx \geq \int_{\Omega} \sup_{\abs{\xi}\leq \gamma} H(x,u(x)) dx=\Gamma,
\end{align}
we can write
\begin{align*}
\inf_{v \in T^{-1}(-\infty, r)} \frac{\int_{\Omega}  H(x,v(x)) dx-\int_{\Omega}\sup_{\abs{\xi}\leq \gamma} H(x,u(x)) dx }{T(v)-T(u)} \geq p  \frac{\int_{\Omega}  H(x,v(x)) dx-\int_{\Omega}\sup_{\abs{\xi}\leq \gamma} H(x,u(x)) dx }{\norm{v}^{p}}.
\end{align*} 
If we put $v(x):=\eta,$ for each $x \in \Omega,$ we have $\norm{v}=\norm{a}_{L^{1}(\Omega)}^{\frac{1}{p}}\eta,$ hence, by (\ref{bestconst}) and $\gamma < \eta,$ we get $T(v)>r.$ Moreover, with this choice of $v,$ (\ref{pcp}) ensures (\ref{HGamma}), thus (\ref{claim2}) is also proved. Hence, the conclusion follows from Theorem \ref{mainresult1}, noting that
\begin{align*}
\frac{1}{A_{2}(r)} \leq \frac{\norm{a}_{L^{1}(\Omega)}\eta^p}{p\left(\int_{\Omega} H(x,\eta) dx- \Gamma \right) }
\end{align*}
and
 \begin{align*}
 \frac{1}{A_{2}(r)}  \geq \frac{\gamma^p}{c^p p \Gamma}.
 \end{align*}
This completes the proof of Theorem \ref{mainresult1} \qed
\section{\textbf{The Case $N \geq sp$}}
Let $M, T$ and $S$ are the same as in Section 2, i.e., $M=W^{s,p}_{\Omega},$
$$
T(u):=\frac{\|u\|^p}{p}= \frac{1}{p}\left( \int_{\Omega} a(x) \abs{u}^p dx + \int_{T(\Omega)} \frac{|u(x)-u(y)|^{p}}{|x-y|^{N+sp}} dx d y \right),
$$ and 
$$\quad S(u):=\int_{\Omega} H(x, u(x)) d x, \quad \forall u \in M.$$ We now prove the Theorem \ref{mainresult2}.\\
\noindent\textbf{Proof of Theorem \ref{mainresult2}:}
The continuity and compactness of the embedding $ W^{s,p}_{\Omega} \hookrightarrow L^q(\Omega) $ ensure that $S$ is well-defined, continuously G\^{a}teaux differentiable, and has a compact derivative. On the other hand, it is easy to see that $ T(u) \to + \infty $ as $ \|u\| \to +\infty ,$ which implies that $ T: M \to \mathbb{R} $ is a coercive functional. Further, as shown in Section 2, it is easy to verify that $ T $ is continuously G\^{a}teaux differentiable, weakly sequentially lower semi-continuous, and its Gâteaux derivative has a continuous inverse on \( M^* \).  Moreover, $T(0)=S(0)=0.$ Since $h \in \mathfrak{F}$, one has that
\begin{align}\label{Carot1}
H(x, \xi) \leq a_1|\xi|+a_2 \frac{\abs{\xi}^q}{q}
\end{align}
for every $(x, \xi) \in \Omega \times \mathbb{R}.$
Let $\rho \in \left( 0, +\infty \right)$ and consider the function

$$
g(\rho)=\frac{\sup _{u \in T^{-1}(-\infty, \rho]}S(u)}{\rho}
$$
Taking into account \eqref{Carot1}, it follows that
$$
S(u)=\int_{\Omega} H(x, u(x)) d x \leq a_1\|u\|_{L^{1}(\Omega)}+\frac{a_2}{q}\|u\|^{q}_{L^{q}(\Omega}.
$$
Then, for every $u \in W^{s,p}_{\Omega}$ such that $T(u) \leq \rho$, owing to \ref{sobolevconst}, we get,

\begin{align}
	S(u) \leq \left( \left(p \rho \right)^{\frac{1}{p}} \right) c_1a_1 + \frac{p^{\frac{q}{p}}c_{q}^{q}a_2}{q}\rho^{\frac{q}{p}}.
\end{align}
Hence, 
\begin{align}\label{Psibound}
\sup _{u \in T^{-1}(-\infty, \rho]}\Psi(u) \leq \left( \left(p \rho \right)^{\frac{1}{p}} \right) c_1a_1 + \frac{p^{\frac{q}{p}}c_{q}^{q}a_2}{q}\rho^{\frac{q}{p}}.
\end{align}
From \eqref{Psibound}, the following inequality holds
\begin{align}\label{gbound}
g(\rho) \leq \left(\frac{p}{\rho^{p-1}}\right)^{1 / p} c_1 a_1+ \frac{p^{\frac{q}{p}}c_{q}^{q}a_2}{q}\rho^{\frac{q}{p}-1}
\end{align}
for every $\rho>0.$ Next, put $u_\delta(x)=\delta$ for every $x \in \mathbb{R}^N.$ Clearly $u_{\delta} \in W^{s,p}_{\Omega}$ and we have
\begin{align}\label{Phid}
	T\left(u_{\delta} \right) & =  \frac{1}{p} \left(  \frac{1}{2}\int_{T(\Omega)} \frac{|u_{\delta}(x)-u_{\delta}(y)|^{p}}{|x-y|^{N+sp}} dx d y +  \int_{\Omega} a(x)\abs{u_{\delta}}^p dx \right) \nonumber \\
	& =\frac{1}{p} \int_{\Omega} a(x) \delta^P d x=\frac{\delta^p}{p}\|a\|_{L^{1}(\Omega)}
\end{align}
Taking into account that $\delta>\varepsilon \kappa$, by a direct computation, one has $\varepsilon^p<T \left(u_\delta\right)$. Moreover,
\begin{align}\label{Psid}
	S(u_\delta)=\int_{\Omega} H\left(x, u_\delta(x)\right) d x=\int_{\Omega} H(x, \delta) d x
\end{align}
Hence, from \eqref{Phid} and \eqref{Psid}, one has 
\begin{align}\label{ratiophipsi}
\frac{S \left(u_\delta \right)}{T\left(u_\delta\right)}=\frac{\int_{\Omega} H(x, \delta) d x}{\delta^{p}\|\alpha\|_{L^{1}(\Omega)}} p.
\end{align}
In view of (Bh2) and taking into account \eqref{gbound} and \eqref{ratiophipsi}, we get
\begin{align*}
	g(\varepsilon^p)=\frac{\sup _{u \in T^{-1}(-\infty, \varepsilon^p]}S(u)}{\varepsilon^p} & \leq \frac{p^{1/p}}{\varepsilon^{p-1}} c_1 a_1+ \frac{p^{\frac{q}{p}}c_{q}^{q}a_2}{q}\varepsilon^{q-p}\\
	& =\frac{p}{\|a\|_{L^1(\Omega)}}\left(a_1 \frac{K_1}{\varepsilon^{p-1}}+a_2 K_2 \varepsilon^{q-p}\right) \\
	& <\frac{\int_{\Omega} F(x, \delta) d x}{\delta^p\|\alpha \|_{L^{1}(\Omega)}} p \\
	& =\frac{S\left(u_\delta \right)}{T\left(u_\delta \right)}. 
\end{align*}
Therefore, assumption $\left(b_1\right)$ of Theorem \ref{Threecrit2} is satisfied.
Moreover, since $t<p$, from the H\"{o}lder's inequality, we obtain
$$
\int_{\Omega}|u(x)|^t d x \leq\|u\|_{L^{p}(\Omega)}^{t} \abs{\Omega}^{\frac{p-t}{p}}, \quad  \forall u \in W^{s,p}_{\Omega}  .
$$
Therefore, bearing in mind \eqref{sobolevconst}, we obtain
\begin{align}\label{utbound}
\int_{\Omega}|u(x)|^t d x \leq c_{p}^{t}  \|u\|^t \abs{\Omega}^{\frac{p-t}{p}}, \quad  \forall u \in W^{s,p}_{\Omega} .
\end{align}
So, from \eqref{utbound} and condition (Bh1), it follows that
$$
J_\lambda(u) \geq \frac{\|u \|}{p}-\lambda b c_{p}^{t}  \|u\|^t \abs{\Omega}^{\frac{p-t}{p}} -\lambda b \abs{\Omega}, \quad \forall u \in W^{s,p}_{\Omega}  .
$$
Hence, $J_{\lambda}$ is a coercive functional for every positive parameter, in particular, for $\lambda \in \Lambda_{(\varepsilon,\delta)}= \left( \frac{T\left(u_\delta \right)}{S\left(u_\delta\right)},\, \frac{\varepsilon^p}{\sup_{T(u) \leq \varepsilon^p} S(u)}  \right),$ for which also condition (b2) holds. Hence, all the assumptions of Theorem \ref{Threecrit2} are satisfied. Then, for each $\lambda \in$ $\Lambda_{(\varepsilon, \delta)}$ the functional $J_\lambda$ has at least three distinct critical points that are weak solutions of the problem \eqref{P1}. This completes the proof of Theorem \ref{mainresult2}. \qed \\

\noindent The following corollary is a direct consequence of Theorem \ref{mainresult2}.
\begin{cor}\label{result2cor}
Let $\phi \in L^{\infty}(\Omega)$ be a non-negtive and non-zero function  and let $\psi: \mathbb{R} \rightarrow \mathbb{R}$ be a non-negative continuous function such that
$\psi(0) \neq 0$. Put $\Psi(\xi):=\int_0^{\xi} \psi(t) d t$ for every $\xi \in \mathbb{R}$ and assume that\\
\begin{enumerate}
\item[($\phi 1$)]  There exist two non-negative constants $a_1, a_2$ such that for every $t \in \mathbb{R}$ one has
$$
\psi(t) \leq a_1+a_2|t|^{q-1}
$$
for some $q \in \left( 1, \frac{p N }{N-ps} \right)$ if $1 \leq sp<N$ and $1<q<+\infty$ if $sp=N$;
\item[$(\phi 2)$] There exists $\delta>\kappa$ such that
$$
\frac{\Psi(\delta)}{\delta^ p}>\frac{\|\phi\|_{L^{\infty}(\Omega)}}{\|\phi\|_{L^{1}(\Omega)}}\left(a_1 L_1+a_2 L_2\right) ;
$$
\item[$(\phi 3)$] $$
\lim _{t \rightarrow+\infty} \frac{h(t)}{t^\beta}=0
$$
for some $0 \leq \beta<(p-1)$.
\end{enumerate}
Then, for each parameter $\lambda$ belonging to
$$
\left( \frac{\delta^p\| \alpha \|_{L^1(\Omega)}}{p\|\phi\|_{L^1(\Omega)} \Psi(\delta)}, \frac{\| \alpha \|_{L^1(\Omega)}}{p\|\phi\|_{L^{\infty}(\Omega)}\left(a_1 L_1+a_2 L_2\right)}  \right) 
$$
the problem
\begin{align}\tag{P2}
\begin{cases} (-\Delta)^{s}_p u+\alpha(x)|u|^{p-2} u=\lambda \phi(x) \psi(u) & \text { in } \Omega \\ \mathcal{N}_{s,p}u=0 & \text { on } \mathbb{R}^{N}\setminus \Omega \end{cases}
\end{align}
possesses at least three non-zeno weak solutions in $W^{s,p}_{\Omega}.$
\end{cor}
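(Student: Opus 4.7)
The plan is to derive this corollary by verifying all hypotheses of Theorem \ref{mainresult2} for the special nonlinearity $h(x,u):=\phi(x)\psi(u)$, with the particular choice $\varepsilon=1$. First I would observe that $h$ is Carath\'eodory since $\phi\in L^\infty(\Omega)$ is measurable and $\psi$ is continuous, and that $h\in\mathfrak{F}$ follows from $(\phi1)$: indeed, because $\psi\geq 0$,
\[
|h(x,t)|=\phi(x)\psi(t)\leq \|\phi\|_{L^\infty(\Omega)}\bigl(a_1+a_2|t|^{q-1}\bigr),
\]
so Theorem \ref{mainresult2} applies with effective constants $\widetilde a_1:=\|\phi\|_{L^\infty(\Omega)}a_1$ and $\widetilde a_2:=\|\phi\|_{L^\infty(\Omega)}a_2$ in the bound of $\mathfrak{F}$. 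Also, $H(x,\xi)=\phi(x)\Psi(\xi)$, so $\int_\Omega H(x,\delta)\,dx=\|\phi\|_{L^1(\Omega)}\Psi(\delta)$.

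Next I would verify (Bh1) using $(\phi 3)$: the limit condition together with the continuity of $\psi$ on compact sets yields a constant $C>0$ with $\psi(t)\leq C(1+|t|^\beta)$ for all $t\in\mathbb{R}$. Integrating and setting $t:=\beta+1\in[1,p)$, one obtains $\Psi(\xi)\leq C'(1+|\xi|^t)$, hence $H(x,\xi)\leq b(1+|\xi|^t)$ with $b:=C'\|\phi\|_{L^\infty(\Omega)}$. Then I would verify (Bh2) with $\varepsilon:=1$: the requirement $\delta>\varepsilon\kappa$ reduces exactly to $\delta>\kappa$ from $(\phi 2)$, and the inequality in (Bh2) becomes
\[
\frac{\|\phi\|_{L^1(\Omega)}\Psi(\delta)}{\delta^p}>\widetilde a_1 L_1+\widetilde a_2 L_2=\|\phi\|_{L^\infty(\Omega)}(a_1L_1+a_2L_2),
\]
which after dividing by $\|\phi\|_{L^1(\Omega)}$ is precisely $(\phi 2)$. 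A direct substitution then shows that the interval
\[
\Bigl(\tfrac{\delta^p\|a\|_{L^1(\Omega)}}{p\int_\Omega H(x,\delta)\,dx},\ \tfrac{\|a\|_{L^1(\Omega)}}{p(\widetilde a_1 L_1/\varepsilon^{p-1}+\widetilde a_2 L_2\varepsilon^{q-p})}\Bigr)
\]
from Theorem \ref{mainresult2} coincides, for $\varepsilon=1$, with the interval stated in the corollary.

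Finally, Theorem \ref{mainresult2} produces three distinct weak solutions, and I would close the argument by showing that none of them is the zero function. Since $T'(0)=0$, any critical point $u=0$ would force $\int_\Omega h(x,0)v\,dx=\psi(0)\int_\Omega \phi(x)v(x)\,dx=0$ for every $v\in W^{s,p}_\Omega$. Because $\psi(0)\neq 0$ and $\phi$ is non-negative and non-zero, this fails (taking for instance $v\equiv 1$), so $0$ is not a critical point of $J_\lambda$ and the three solutions produced must all be non-zero.

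The main obstacle—or rather, the only delicate step—is the verification of (Bh1) from $(\phi 3)$: one must be careful in translating the asymptotic condition $\psi(t)/t^\beta\to 0$ into a uniform pointwise bound on $\Psi$ with a power strictly less than $p$. All remaining computations are bookkeeping: the constants $L_1,L_2,\kappa$ are defined so that the specialization $\varepsilon=1$ in Theorem \ref{mainresult2} produces exactly the interval appearing in the corollary.
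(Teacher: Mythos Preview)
Your proposal is correct and follows exactly the route the paper intends: the paper states the corollary as ``a direct consequence of Theorem~\ref{mainresult2}'' without any further argument, and your verification---setting $h(x,u)=\phi(x)\psi(u)$, choosing $\varepsilon=1$, checking (Bh1) via $(\phi3)$ and (Bh2) via $(\phi2)$, matching the $\lambda$-interval, and ruling out $u\equiv 0$ via $\psi(0)\neq 0$---is precisely the bookkeeping the paper leaves implicit. One small point worth making explicit: since $(\phi3)$ controls $\psi$ only as $t\to+\infty$, the bound $H(x,\xi)\le b(1+|\xi|^{t})$ for $\xi<0$ follows separately from $\psi\ge 0$, which forces $\Psi(\xi)\le 0$ and hence $H(x,\xi)\le 0$ there.
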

\noindent We conclude this section with an application of Corollary \ref{result2cor}.
\begin{ex}\label{example1}
	Let $\Omega \subset \mathbb{R}^N$ be a bounded, open set of class $C^{1}.$ Let $N>sp>1$ and $a \in L^{\infty}(\Omega),$ with $\ess\inf_{x \in \Omega} a(x) >0.$ Let $q \in \left(p, \frac{Np}{N-ps}\right)$ if $N>ps>1$ or $q>p$ if $N=ps.$ Let $h: \mathbb{R} \rightarrow \mathbb{R}$ defined  by
	 \begin{align}
	\psi(t):= \begin{cases}
		1+\abs{t}^{q-1} &  \text{ if } t \leq \rho,\\
		\frac{(1+\rho^2)(1+\rho^{q-1})}{1+t^2} & \text{ if } t>\rho,
	\end{cases}
	\end{align}
	where $\rho$ is a fixed constant such that 
	\begin{align}\label{rhobound}
	\rho > \max \left\{\kappa, q^{\frac{1}{q-p}} \left( \frac{L_1+L_2}{\abs{\Omega}} \right)^{\frac{1}{q-p}} \right\},
	\end{align}
	and let $\phi(x)=1$ for all $x \in \Omega.$ Clearly, $\psi(0) \neq 0$ and $\psi(t) \leq \left(1+\abs{t}^{q-1}\right)$ for every $t \in \mathbb{R}.$  Hence the condition ($\phi1$) is satisified for for $a_1=a_2=1.$ Moreover, also condition $(\phi 3)$ is verified. Indeed, for every fixed $\beta \in (0,p-1),$ one has that $\lim _{t \rightarrow+\infty} \frac{h(t)}{t^\beta}=0 .$ Finally, using \eqref{rhobound}, it follows that 
	$$\frac{L_1+L_2}{\abs{\Omega}}< \frac{\rho^{q-p}}{q} .$$ Therefore, 
	$$\frac{\int_{0}^{\rho}\psi(t) dt }{\rho^p} =\frac{\rho^{q-p}}{q}+\frac{1}{\rho^{p-1}}>\frac{L_1+L_2}{\abs{\Omega}}$$ and condition $(\phi 2)$ holds choosing $\delta=\rho.$ Consequently, owing to Corollary \ref{result2cor}, the following problem
		\begin{align}  \begin{cases}
			(-\Delta)^{s}_{p}u + a(x) \abs{u}^{p-2}u =\lambda \psi(u) & \text {in } \Omega, \\
			\mathcal{N}_{s,p}u=0  & \text {in } \mathbb{R}^N \setminus \overline{\Omega}, 
		\end{cases}
	\end{align}
	for each parameter $\lambda$ belonging to $$
	\left( \frac{\rho^p\| a \|_{L^1(\Omega)}}{p \abs{\Omega} \psi(\rho)}, \frac{\| \ a \|_{L^1(\Omega)}}{p \left( L_1+ L_2\right)}  \right) ,
	$$ possesses at least three non-zero weak solutions in $W^{s,p}_{\Omega}.$
\end{ex}

\end{document}